\theoremstyle{plain}
\newtheorem{theorem}{Theorem}[section]
\newtheorem{lemma}[theorem]{Lemma}
\newtheorem{proposition}[theorem]{Proposition}
\theoremstyle{remark}
\newtheorem*{remark}{Remark}
\newcommand{\CC}{\mathbb{C}}
\newcommand{\DD}{\mathbb{D}}
\newcommand{\cB}{\mathcal{B}}
\renewcommand{\tilde}{\widetilde}
\let\Re\undefined
\let\Im\undefined
\DeclareMathOperator{\Re}{\mathrm{Re}}
\DeclareMathOperator{\Im}{\mathrm{Im}}
\DeclareMathOperator{\hol}{\mathrm{Hol}}
\DeclareMathOperator{\area}{\mathrm{area}}
\DeclareMathOperator{\diam}{\mathrm{diam}}
\DeclareMathOperator{\conv}{\mathrm{conv}}
\begin{document}

\title[On  the Crouzeix ratio for $N\times N$ matrices]{On  the Crouzeix ratio for $N\times N$ matrices}

\author[B. Malman]{Bartosz Malman}
\address{Division of Mathematics and Physics, M\"alardalen University, V\"aster\aa s, Sweden}
\email{bartosz.malman@mdu.se} 

\author[J. Mashreghi]{Javad Mashreghi}
\address{D\'epartement de math\'ematiques et de statistique, Universit\'e Laval, Qu\'ebec (QC), G1V 0A6, Canada}
\email{javad.mashreghi@mat.ulaval.ca}

\author[R. O'Loughlin]{Ryan O'Loughlin}
\address{D\'epartement de math\'ematiques et de statistique, Universit\'e Laval, Qu\'ebec (QC), G1V 0A6, Canada}
\email{ryan.oloughlin.1@ulaval.ca}

\author[T. Ransford]{Thomas Ransford}
\address{D\'epartement de math\'ematiques et de statistique, Universit\'e Laval, Qu\'ebec (QC), G1V 0A6, Canada}
\email{thomas.ransford@mat.ulaval.ca}

\date{21 Sep 2024}

\begin{abstract}
The Crouzeix ratio $\psi(A)$ of an $N\times N$ complex matrix $A$ is the supremum of $\|p(A)\|$
taken over all polynomials $p$ such that $|p|\le 1$ on the numerical range of $A$.
It is known that $\psi(A)\le 1+\sqrt{2}$, and it is conjectured that $\psi(A)\le 2$. 
In this note, we show that $\psi(A)\le C_N$, where $C_N$ is a constant depending only on $N$
and satisfying $C_N<1+\sqrt{2}$.
The proof is based on a study of the continuity properties of the map $A\mapsto \psi(A)$.
\end{abstract}

\thanks{Part of this work was carried out during Malman's visit to Universit\'e Laval, supported by 
the Simons--CRM Scholar-in-Residence program. Mashreghi's research was supported by an NSERC Discovery Grant and the Canada Research Chairs program. O'Loughlin was supported by a CRM--Laval Postdoctoral Fellowship. Ransford's research was supported by an NSERC Discovery Grant.}

\keywords{Operator, matrix, spectrum, numerical range, Crouzeix ratio}
	
\makeatletter
\@namedef{subjclassname@2020}{\textup{2020} Mathematics Subject Classification}
\makeatother
	
\subjclass[2020]{15A60, 47A12, 47A30}

\maketitle

\section{Introduction}\label{S:Intro}

Let $H$ be a 
complex Hilbert space, and let 
$\cB(H)$ be the algebra of bounded linear
operators on $H$, equipped with the operator norm.
Given $T\in\cB(H)$, we write $\sigma(T)$ for the spectrum of $T$, and
$W(T)$ for the \emph{numerical range} of $T$, namely the set
\[
W(T):=\bigl\{\langle Tx,x\rangle: x\in H, \|x\|=1\bigr\}.
\]
It is well known that $W(T)$ is a bounded convex subset of  $\CC$ whose closure contains $\sigma(T)$.
If further $\dim H<\infty$, then $W(T)$ is also compact.

The central object of study in this note is the \emph{Crouzeix ratio} of $T\in\cB(H)$, defined by
\begin{equation}\label{E:psidef}
\psi(T):=\sup\Bigl\{\|p(T)\|:p \text{~is a polynomial}, |p|\le 1 \text{~on~}W(T)\Bigr\}.
\end{equation}
It was first studied by Crouzeix in \cite{Cr04}.
The terminology `Crouzeix ratio' is taken from \cite{CGL24} and \cite{Ov22}.
We always have $\psi(T)\ge 1$ (consider $p\equiv1$),
with equality if $T$ is a normal operator. 
It is also easy to see that $\psi(U^*TU)=\psi(T)$
for all unitary operators $U$, and that
$\psi(\alpha T+\beta I)=\psi(T)$ for all $\alpha,\beta\in\CC$ with $\alpha\ne0$.

It is not obvious, \emph{a priori}, that the Crouzeix ratio is always finite.
That this is indeed the case was first proved by Delyon and Delyon in \cite{DD99}.
This is sometimes expressed by saying that $W(T)$ is  a \emph{$\psi(T)$-spectral set} for $T$.
As a consequence of their result, the homomorphism $p\mapsto p(T)$ extends by continuity
to a homomorphism $f\mapsto f(T)$ defined for all $f\in A(W)$, where $W=\overline{W(T)}$, 
and $A(W)$ is the uniform algebra of all continuous functions on $W$ that are holomorphic on $W^\circ$,
 the interior of $W$.
The extended map, often called the functional calculus for $T$,  satisfies
\begin{equation}\label{E:fc}
\|f(T)\|\le\psi(T)\sup_{W}|f| \quad(f\in A(W)).
\end{equation}

The same article also contains a result \cite[Theorem~3]{DD99}  
that implies a quantitative bound for $\psi(T)$, namely
\[
\psi(T)\le \Bigl(\frac{2\pi \diam(W)^2}{\area(W)}\Bigr)^3+3.
\]
However, this does not yield a universal numerical bound for $\psi(T)$. The first such bound was 
obtained by Crouzeix, who showed in \cite[Theorems~1 and~2]{Cr07} that we always have
\[
\psi(T)\le 11\!\cdot\!08.
\]
Some years later, Crouzeix and Palencia \cite[Theorem~3.1]{CP17} improved this estimate to
\begin{equation}\label{E:CPbound}
\psi(T)\le 1+\sqrt{2},
\end{equation}
at the same time greatly simplifying the proof.
Recently, this bound was further improved to 
\begin{equation}\label{E:a(W)}
\psi(T)\le 1+\sqrt{1+a(W)},
\end{equation}
where, once again $W=\overline{W(T)}$, and where $a(W)$ is the so-called analytic configuration
constant of $W$, which is a number depending only on $W$ 
and satisfying $0\le a(W)<1$ (see \cite[Theorem~2 and Proposition~26]{MMOR24}).
In particular, we always have
\begin{equation}\label{E:CPstrict}
\psi(T)<1+\sqrt{2},
\end{equation}
so the Crouzeix--Palencia bound  in \eqref{E:CPbound} is never attained.

To continue the discussion, it will be convenient to introduce a further piece of notation. 
For each $N\ge1$, we write $M_N(\CC)$ for the algebra of complex $N\times N$ matrices, and define
\begin{equation}\label{E:CNdef}
C_N:=\sup\{\psi(A): A \in M_N(\CC)\}.
\end{equation}
It is easy to see that $C_1=1$ and that $C_N\le C_{N+1}$ for all $N\ge1$.
Clearly, from \eqref{E:CPbound}, we have $C_N\le 1+\sqrt{2}$ for all $N$, 
so $C_N$ converges to a limit $C$, say, where $C\le 1+\sqrt{2}$. 
An approximation argument (see e.g.\ \cite[Theorem~2]{Cr07})
then shows that $\psi(T)\le C$ for all Hilbert-space operators $T$. 
Thus there is some interest
in determining or estimating the values of $C_N$.

Crouzeix showed in \cite[Theorem~1.1]{Cr04} that $C_2=2$.
He did this by finding an explicit formula for $\psi(A)$ when
$A\in M_2(\CC)$.
He further conjectured that $C_N=2$ for all $N\ge2$.
This conjecture remains open.
There is a substantial amount of supporting numerical evidence (see e.g.\ \cite{GO18,Ov22}), and the conjecture is known to be true for many special classes
of matrices (see e.g.\ \cite{BCD06,BG24,CGL18,Ch13,CGL24,GKL18,GC12}).
However, even for $3\times 3$ matrices, no universal bound for the Crouzeix ratio seems to be known, beyond
the estimate $C_3\le 1+\sqrt{2}$ already mentioned above. The following result,
which is our main theorem, may therefore be of  interest.

\begin{theorem}\label{T:main}
For each $N\ge 1$, we have the strict inequality $C_N<1+\sqrt{2}$.
\end{theorem}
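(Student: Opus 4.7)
The plan is to exploit the invariance of $\psi$ under affine substitutions and unitary conjugation to restrict the supremum \eqref{E:CNdef} to matrices lying in a compact subset of $M_N(\CC)$, and then to establish a suitable upper-semicontinuity property of $\psi$ so that the supremum is realized as the value $\psi(A_\infty)$ at a single limit matrix $A_\infty$. The strict inequality \eqref{E:CPstrict} applied at this $A_\infty$ then delivers $C_N < 1+\sqrt 2$.

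First I would normalize. The identities $\psi(\alpha A + \beta I) = \psi(A)$ for $\alpha \in \CC \setminus\{0\}$, $\beta \in \CC$, and $\psi(U^* A U) = \psi(A)$ for unitary $U$ allow me to replace any non-scalar $A \in M_N(\CC)$ by an affine image in
\[
\mathcal{K}_N := \bigl\{A \in M_N(\CC) : 0 \in W(A),\ \max\{|z| : z \in W(A)\} = 1\bigr\}.
\]
Since $\|A\|\le 2w(A)=2$ on $\mathcal{K}_N$, and since $A \mapsto W(A)$ is continuous in the Hausdorff metric, $\mathcal{K}_N$ is compact. Scalar matrices contribute only $\psi = 1$, so it suffices to show $\sup_{\mathcal K_N} \psi < 1+\sqrt 2$.

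Next, supposing for contradiction that $\sup_{\mathcal K_N}\psi = 1+\sqrt 2$, I would choose $A_k \in \mathcal K_N$ with $\psi(A_k)\to 1+\sqrt 2$ and extract a convergent subsequence $A_k \to A_\infty \in \mathcal K_N$. The goal is the upper-semicontinuity inequality $\psi(A_\infty) \ge \limsup_k \psi(A_k) = 1+\sqrt 2$, which together with \eqref{E:CPstrict} is a contradiction. For each $k$ I would pick a polynomial $p_k$ with $|p_k|\le 1$ on $W(A_k)$ and $\|p_k(A_k)\|\ge \psi(A_k)-1/k$. Cayley--Hamilton writes $p_k(A_k) = q_k(A_k)$ for a polynomial $q_k$ of degree less than $N$. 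If a uniform bound $\|q_k\|_{W(A_k)}\le M$, with $M$ depending only on $N$, can be secured, then the $q_k$ lie in a precompact subset of the $N$-dimensional space of polynomials of degree $<N$; a further subsequence converges coefficient-wise to some $q_\infty$. Joint continuity of the finite-dimensional functional calculus gives $q_k(A_k) \to q_\infty(A_\infty)$, while Hausdorff convergence $W(A_k)\to W(A_\infty)$ combined with the uniform bound forces $|q_\infty|\le M$ on $W(A_\infty)$. Hence $\psi(A_\infty) \ge (1+\sqrt 2)/M$, and taking $M = 1$ would close the argument.

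The principal obstacle is exactly the uniform degree-reduction estimate $\|q_k\|_{W(A_k)}\le M$, a Markov--Bernstein-type inequality over the family $\mathcal K_N$. It is most delicate when $W(A_\infty)$ degenerates to a thin set such as a line segment, where $p_k$ may oscillate wildly off $W(A_k)$ while its reduction modulo the minimal polynomial of $A_k$ must remain controlled; note that a naive attempt at upper semicontinuity fails, since for instance $\psi$ jumps downward on sequences of Jordan blocks scaled to zero, so the normalization constraint $w(A)=1$ is essential and its delicate boundary behaviour must be accommodated. An alternative route that sidesteps the explicit polynomial estimate is to use the sharper bound \eqref{E:a(W)}: it would suffice to prove upper semicontinuity of the analytic configuration constant $W \mapsto a(W)$ on compact convex subsets of $\CC$ in the Hausdorff metric, for then $\limsup_k a(W(A_k)) \le a(W(A_\infty)) < 1$ would give $\limsup_k \psi(A_k) \le 1 + \sqrt{1+a(W(A_\infty))} < 1+\sqrt 2$ and complete the proof without any polynomial manipulation.
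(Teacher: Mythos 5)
Your overall skeleton (use the affine and unitary invariance of $\psi$ to normalize into a compact family, extract a maximizing subsequence, and apply the pointwise strict bound \eqref{E:CPstrict} at the limit) is the right one, and it is essentially the skeleton of the paper's proof. But the step you flag as ``the principal obstacle'' is not merely delicate --- it is genuinely false, and no Markov--Bernstein-type estimate can rescue it. The normalization $0\in W(A)$, $w(A)=1$ does \emph{not} restore upper semicontinuity of $\psi$: Proposition~\ref{P:discont} exhibits matrices $A_\alpha\to A_0$, all lying in your set $\mathcal{K}_3$ (since $W(A_\alpha)=\conv(\{|z|\le\alpha/2\}\cup\{1\})$ contains $0$ and has numerical radius $1$), with $\psi(A_0)=1$ but $\psi(A_\alpha)\ge\pi/2$. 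Concretely, the degree-$2$ reduction $q_\alpha$ modulo the characteristic polynomial $z^2(z-1)$ of (polynomials approximating) the extremal function $\tanh(\pi z/(2\alpha))$ satisfies $q_\alpha(0)=0$ and $q_\alpha'(0)=\pi/(2\alpha)$, so Markov's inequality on $[0,1]\subset W(A_\alpha)$ forces $\|q_\alpha\|_{W(A_\alpha)}\ge \pi/(16\alpha)\to\infty$; thus not only does $M=1$ fail (which is what your contradiction requires), no finite uniform $M$ exists. Your alternative route via upper semicontinuity of $W\mapsto a(W)$ is left entirely unproved and faces the same danger at exactly the same place: the maximizing sequence may converge to a matrix whose numerical range degenerates to a segment, where the analytic configuration constant is not controlled and no uniform bound $a(W)\le 1-\delta$ is available.

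The missing idea is how to handle limit points $A$ at which $\sigma(A)$ meets $\partial W(A)$ --- these are precisely the points of genuine discontinuity that survive your normalization. The paper shows (Lemma~\ref{L:decomp}) that such an $A$, if non-scalar, is unitarily equivalent to a direct sum $E\oplus F$ of strictly smaller blocks with disjoint spectra, and then (Lemma~\ref{L:invsub}, a spectral-projection perturbation argument) that nearby matrices $A_n$ admit a similar block decomposition via similarities $S_n\to I$; this yields $\limsup_n\psi(A_n)\le\|S_n\|^2\|S_n^{-1}\|^2C_{N-1}\to C_{N-1}$ rather than $\le\psi(A)$. Consequently the modified function $\tilde{\psi}:=\max\{\psi,C_{N-1}\}$ \emph{is} upper semicontinuous on $M_N(\CC)\setminus\CC I$, the compactness argument applies to $\tilde{\psi}$ on the unit sphere of the quotient $M_N(\CC)/\CC I$, and the theorem follows by induction on $N$. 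Without some substitute for this dimension-reduction step, your compactness argument cannot be closed.
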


In view of the pointwise estimate \eqref{E:CPstrict},
a natural approach to  Theorem~\ref{T:main}
is to show that the supremum in \eqref{E:CNdef}
is always attained. This suggests trying some form
of compactness argument. However,
a compactness argument
presupposes the continuity of the map $A\mapsto\psi(A)$,
and it is not hard to see that this map is discontinuous
at every multiple of the identity. Worse still, $\psi$ has other
discontinuities as well, which turns out to be  a more serious 
obstacle.
Fortunately,
however, $\psi$ is continuous at a large enough set of points
for us to be able to push through the  
compactness argument, as proposed.

Here, in more detail, is a plan of the article.
In \S\ref{S:continuity}, we investigate the continuity
properties of the map $A\mapsto\psi(A)$ on $M_N(\CC)$.
We show that it is lower semicontinuous everywhere, continuous in some places and discontinuous at others. The proof of 
Theorem~\ref{T:main} is presented in \S\ref{S:proof}.
Finally, in \S\ref{S:conclusion}, we make some concluding
remarks and pose some questions.


\section{Continuity properties of the Crouzeix ratio}\label{S:continuity}

\subsection{Lower semicontinuity}

Our first result shows that $\psi$ is lower semicontinuous
on $M_N(\CC)$.

\begin{theorem}\label{T:lsc}
If $A_n\to A$ in $M_N(\CC)$,
then
\begin{equation}\label{E:lsc}
\liminf_{n\to\infty}\psi(A_n)\ge \psi(A).
\end{equation}
\end{theorem}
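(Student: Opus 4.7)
My plan is to exploit the fact that the numerical range depends continuously on the matrix in the Hausdorff metric, and to compensate for the variation in the feasibility constraint by shrinking a near-extremal polynomial by a small factor. The main obstacle is that the feasibility set $\{p:|p|\le 1\text{ on }W(T)\}$ depends on $T$, so a polynomial that nearly realizes $\psi(A)$ need not be admissible for $A_n$; shrinking creates enough slack to accommodate the small perturbation of $W(A_n)$.

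First I would fix $\varepsilon>0$ and, using the definition of $\psi(A)$, choose a polynomial $p$ with $|p|\le 1$ on $W(A)$ and $\|p(A)\|\ge\psi(A)-\varepsilon$. For each $\eta\in(0,1)$, I set $q_\eta := (1-\eta)p$, so that $|q_\eta|\le 1-\eta$ on $W(A)$. The aim is then to show that, for suitable $\eta$ and all sufficiently large $n$, one has $|q_\eta|\le 1$ on $W(A_n)$, which immediately yields the admissibility bound $\psi(A_n)\ge\|q_\eta(A_n)\|$.

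To verify the admissibility, I would use two standard continuity facts. First, the map $B\mapsto W(B)$ is continuous in the Hausdorff metric: since $W(B)$ is the image of the compact unit sphere of $\CC^N$ under the continuous map $x\mapsto\langle Bx,x\rangle$, which varies continuously in $B$ uniformly in $x$ on the sphere, for every $\delta>0$ we have $W(A_n)\subseteq \{z\in\CC : \mathrm{dist}(z,W(A))\le\delta\}$ for all large enough $n$. Second, $p$ is Lipschitz on any bounded set. Fixing a disk $D$ containing $W(A)$ together with its $1$-neighborhood and letting $L$ be the Lipschitz constant of $p$ on $D$, we obtain $|p(z)|\le 1+L\delta$ for every $z\in W(A_n)$ (provided $n$ is large and $\delta<1$), and hence $|q_\eta(z)|\le(1-\eta)(1+L\delta)$. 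Choosing $\delta<\eta/L$ renders this at most $1$, as required.

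It then remains to pass to the limit. Since $q_\eta$ is a fixed polynomial and $A_n\to A$ in norm, continuity of the polynomial functional calculus gives $\|q_\eta(A_n)\|\to(1-\eta)\|p(A)\|$. Combining with the admissibility bound, $\liminf_n\psi(A_n)\ge(1-\eta)(\psi(A)-\varepsilon)$, and letting first $\eta\to 0$ and then $\varepsilon\to 0$ yields \eqref{E:lsc}. The only genuine obstacle is the one identified in the opening paragraph; the shrinking trick $(1-\eta)p$ dispatches it, and the remainder of the argument is routine.
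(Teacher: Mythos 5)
Your proof is correct, and it takes a genuinely different (and more elementary) route than the paper's. The paper's proof goes through the relative Crouzeix ratio $\psi_U(A)$ and Crouzeix's identity \eqref{E:2psis}: for a fixed open neighbourhood $U\supset W(A)$ and $f\in H^\infty(U)$ with $|f|\le1$ on $U$, the inclusion $W(A_n)\subset U$ holds for all large $n$, so $\|f(A_n)\|\le\psi_U(A_n)\le\psi(A_n)$, and letting $n\to\infty$ and then taking suprema over $f$ and over $U$ gives \eqref{E:lsc}. In that argument the open neighbourhood $U$ supplies exactly the slack that your factor $(1-\eta)$ supplies; the price is the imported identity \eqref{E:2psis} from \cite{Cr04}. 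Your version stays entirely within the defining supremum over polynomials and replaces that identity by two elementary facts: the one-sided Hausdorff estimate $W(A_n)\subset\{z:\mathrm{dist}(z,W(A))\le\|A_n-A\|\}$, immediate from $|\langle A_nx,x\rangle-\langle Ax,x\rangle|\le\|A_n-A\|$ for unit vectors $x$, and the Lipschitz bound for the fixed polynomial $p$ on a convex bounded set containing $W(A)$ and its $1$-neighbourhood. Two cosmetic points only: your choice of a near-extremal $p$ tacitly uses $\psi(A)<\infty$, which is guaranteed by \eqref{E:CPbound} (and is not essential, since running the same argument with $\|p(A)\|\ge M$ for arbitrary $M$ would force $\liminf_n\psi(A_n)=\infty$ in that case); and the degenerate case of a constant $p$, where the Lipschitz constant is $0$, should be absorbed by simply taking $L\ge1$. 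Neither affects the validity of the argument, which is sound and self-contained.
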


To prove this result, it will be convenient to introduce an auxiliary notion.
Given $A\in M_N(\CC)$ and an open neighbourhood $U$ of $\sigma(A)$, we define the 
\emph{relative Crouzeix ratio} of the pair $(A,U)$ by
\[
\psi_U(A):=\sup\{\|f(A)\|:f\in H^\infty(U),\,|f|\le 1\text{~on~}U\}.
\]
This quantity was  considered (under another name) by Crouzeix in \cite{Cr04}. 
Among other results, he showed in \cite[Lemma~2.2]{Cr04} that
\begin{equation}\label{E:2psis}
\psi(A)=\sup_{U\supset W(A)}\psi_U(A),
\end{equation}
where the supremum is taken over all open neighbourhoods $U$ of $W(A)$.

\begin{proof}[Proof of Theorem~\ref{T:lsc}]
Let $A_n\to A$ in $M_N(\CC)$.
Let $U$ be an open neighbourhood of $W(A)$,
and let $f\in H^\infty(U)$ with $|f|\le1$ on $U$.
For all large enough~$n$, 
we have $W(A_n)\subset U$, 
and in particular $\sigma(A_n)\subset U$.
For each such $n$, we clearly have $\|f(A_n)\|\le \psi_U(A_n)\le \psi(A_n)$.
Also we have $f(A_{n})\to f(A)$ as $n\to\infty$, 
and in particular  $\|f(A_{n})\|\to\|f(A)\|$.
Hence
\[
\liminf_{n\to\infty}\psi(A_n)\ge \liminf_{n\to\infty}\|f(A_n)\|=\|f(A)\|.
\]
Taking the supremum of the right-hand side 
over all $f\in H^\infty(U)$ such that $|f|\le 1$ on $U$, we deduce that
\[
\liminf_{n\to\infty}\psi(A_n)\ge \psi_U(A).
\]
Finally, taking the supremum of the right-hand side over all open neighbourhoods $U$ of $W(A)$
and using \eqref{E:2psis}, we obtain the desired conclusion \eqref{E:lsc}.
\end{proof}


\subsection{Upper semicontinuity}

The following result shows that $\psi$ is upper
semicontinuous (and hence continuous) at each
matrix whose spectrum is contained in the interior
of its numerical range.

\begin{theorem}\label{T:usc}
Let  $A\in M_N(\CC)$ be such that $\sigma(A)\subset W(A)^\circ$.
 If $A_n\to A$, then
\[
\limsup_{n\to\infty}\psi(A_n)\le \psi(A).
\]
\end{theorem}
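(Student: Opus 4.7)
The plan is to pass to the limit in a sequence of near-optimal test polynomials for $\psi(A_n)$. For each $n$, choose a polynomial $p_n$ with $|p_n|\le 1$ on $W(A_n)$ and $\|p_n(A_n)\|\ge \psi(A_n)-1/n$. Two elementary facts drive the argument: the continuity of the spectrum, $\sigma(A_n)\to\sigma(A)$, and the Hausdorff convergence of the numerical range, $d_H(W(A_n),W(A))\to 0$ (immediate from $\|A_n-A\|\to 0$). Since the $W(A_n)$ are convex compact sets, a short support-function argument (based on $B(x,d)\subset W(A)$ whenever $\mathrm{dist}(x,\partial W(A))\ge d$) shows that any compact $K\subset W(A)^\circ$ lies inside $W(A_n)$ once $n$ is large enough, so $|p_n|\le 1$ on $K$ for such $n$. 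A Montel diagonal argument then produces a subsequence, which I continue to index by $n$, such that $p_n\to f$ uniformly on compact subsets of $W(A)^\circ$, for some holomorphic function $f$ on $W(A)^\circ$ with $|f|\le 1$.

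The hypothesis $\sigma(A)\subset W(A)^\circ$ is next used to transfer this convergence to the operator level. Choose a smooth contour $\Gamma\subset W(A)^\circ$ enclosing $\sigma(A)$, at positive distance both from $\sigma(A)$ and from $\partial W(A)$. For $n$ large, $\sigma(A_n)$ still lies strictly inside $\Gamma$ (continuity of the spectrum) and $\Gamma$ still lies inside $W(A_n)$ (Hausdorff convergence); hence the Dunford--Riesz representation applies on both sides,
\[
p_n(A_n)=\frac{1}{2\pi i}\oint_\Gamma p_n(z)(zI-A_n)^{-1}\,dz, \qquad f(A):=\frac{1}{2\pi i}\oint_\Gamma f(z)(zI-A)^{-1}\,dz.
\]
Since $p_n\to f$ uniformly on $\Gamma$ (a compact subset of $W(A)^\circ$) and $(zI-A_n)^{-1}\to (zI-A)^{-1}$ uniformly on $\Gamma$, this yields $\|p_n(A_n)-f(A)\|\to 0$. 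Choosing the original subsequence so as to realize $\limsup_n\psi(A_n)$, we arrive at $\limsup_n\psi(A_n)=\|f(A)\|$.

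The crux, and the main obstacle, is bounding $\|f(A)\|$ by $\psi(A)$: a priori $f$ lies only in $H^\infty(W(A)^\circ)$ and need not extend continuously to $\overline{W(A)}$, so estimate \eqref{E:fc} cannot be applied to $f$ directly. I would handle this with a standard dilation trick. Pick $c\in W(A)^\circ$ (nonempty by the hypothesis) and for $\lambda\in(0,1)$ set $f_\lambda(z):=f(\lambda z+(1-\lambda)c)$. Convexity of $\overline{W(A)}$ together with $c\in W(A)^\circ$ implies $\lambda z+(1-\lambda)c\in W(A)^\circ$ for every $z\in\overline{W(A)}$, so $f_\lambda$ extends holomorphically to a neighbourhood of $\overline{W(A)}$, belongs to $A(\overline{W(A)})$, and satisfies $|f_\lambda|\le 1$ on $\overline{W(A)}$. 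Hence \eqref{E:fc} gives $\|f_\lambda(A)\|\le\psi(A)$. As $\lambda\to 1^-$, the same contour representation (now applied to $f_\lambda$) combined with the uniform convergence $f_\lambda\to f$ on $\Gamma$ yields $f_\lambda(A)\to f(A)$, and therefore $\|f(A)\|\le\psi(A)$. Combining this with the preceding paragraph gives $\limsup_n\psi(A_n)\le\psi(A)$.
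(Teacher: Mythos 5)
Your proposal is correct and follows essentially the same route as the paper's proof: extract near-optimal polynomials, use a normal-families argument to get a locally uniform limit $f$ on $W(A)^\circ$ with $|f|\le 1$, pass to the limit $p_n(A_n)\to f(A)$ using $\sigma(A)\subset W(A)^\circ$, and then control $\|f(A)\|$ by $\psi(A)$ via a dilation $f_\lambda$ toward an interior point. The only differences are presentational (the paper argues by contradiction and translates so that $0\in W(A)^\circ$ before dilating, and asserts rather than writes out the Dunford--Riesz justification of $p_n(A_n)\to f(A)$).
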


\begin{proof}[Proof of Theorem~\ref{T:usc}]
Suppose for a contradiction, that 
there exist $\epsilon>0$ and a sequence $(A_n)$ such that $A_n\to A$ in $M_N(\CC)$ and
\begin{equation}\label{E:contradiction}
\psi(A_n)>\psi(A)+\epsilon \quad(n\ge1).
\end{equation}
Then there exist polynomials $p_n$ such that, for all $n\ge1$, 
\[
\sup_{W(A_n)}|p_n|\le1 
\quad\text{and}\quad
\|p_n(A_n)\|>\psi(A)+\epsilon.
\]
Since $A_n\to A$, every compact subset of $W(A)^\circ$
will eventually be contained in $W(A_n)^\circ$.
Thus, by a normal-family argument, a subsequence of the $p_n$ 
(which, by relabelling, we may suppose to be the whole
sequence) converges locally uniformly on $W(A)^\circ$
to a holomorphic function $f$  such that $|f|\le1$ on $W(A)^\circ$.
Since $A_n\to A$ and $\sigma(A)\subset W(A)^\circ$, we have $p_n(A_n)\to f(A)$, and in particular 
$\|p_n(A_n)\|\to\|f(A)\|$. Thus $\|f(A)\|\ge \psi(A)+\epsilon$.

As the whole situation is invariant under translation
in the complex plane, there is no loss of generality in supposing,
from the outset,
that $0\in W(A)^\circ$. For $r\in(0,1)$, let $f_r$ denote the $r$-dilation of $f$, given by $f_r(z):=f(rz)$. 
Note that $f_r\in A(W(A))$ and that $|f_r|\le 1$ on $W(A)$.
Since $f_r\to f$ as $r\to1^-$
uniformly on a neighbourhood of $\sigma(A)$, 
it follows that $\|f_r(A)\|\to\|f(A)\|$.
In particular, if $r$ is sufficiently close to $1$,
then $\|f_r(A)\|>\psi(A)$.
This contradicts \eqref{E:fc}.
\end{proof}

\subsection{Discontinuity}

It is easy to see that,
if $N\ge2$, then $A\mapsto\psi(A)$ is discontinuous at $A=0$.
Indeed, since $\psi(\alpha A)=\psi(A)$ for all $\alpha\ne0$,
we have 
\[
\limsup_{A\to 0}\psi(A)=\sup_{A\in M_N(\CC)}\psi(A)\ge2,
\]
whereas $\psi(0)=1$.
Since $\psi(A)=\psi(A+\beta I)$ for all $\beta\in\CC$, 
it follows that $\psi$ is discontinuous at every multiple of the identity matrix.

If $N\ge3$, then $\psi$ is also discontinuous 
at some matrices $A$ that are not multiples of the identity.
The following result gives an example of this sort of discontinuity.

\begin{proposition}\label{P:discont}
For $\alpha\ge0$, let 
\[
A_\alpha:=
\begin{pmatrix} 
1&0&0\\ 0&0&\alpha\\0&0&0
\end{pmatrix}.
\]
Then $\psi(A_\alpha)\ge\pi/2$ for all $\alpha>0$ and
$\psi(A_0)=1$. Consequently $\psi$ is discontinuous at $A_0$.
\end{proposition}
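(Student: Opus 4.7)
The proof splits into two parts. First, $A_0=\diag(1,0,0)$ is normal, so $\psi(A_0)=1$ by the remarks after \eqref{E:psidef}. The substantive step is the uniform bound $\psi(A_\alpha)\ge\pi/2$ for every $\alpha>0$; granted this, discontinuity at $A_0$ is immediate, since $A_\alpha\to A_0$ as $\alpha\to 0^+$.

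For the lower bound, the plan exploits the block-diagonal decomposition $A_\alpha=[1]\oplus J_\alpha$ with $J_\alpha=\bigl(\begin{smallmatrix}0&\alpha\\0&0\end{smallmatrix}\bigr)$. The numerical range of a direct sum being the convex hull of those of the blocks, and $W(J_\alpha)$ being the closed disc of radius $\alpha/2$ centred at $0$ (elliptical range theorem),
\[
W(A_\alpha)=\conv\bigl(\{1\}\cup\{z\in\CC:|z|\le\alpha/2\}\bigr),
\]
an ``ice-cream cone''. I will exhibit a function $f$ on $W(A_\alpha)$ with $\sup_{W(A_\alpha)}|f|\le 1$ and $\|f(A_\alpha)\|\ge\pi/2$; the functional calculus bound \eqref{E:fc} (or polynomial approximation directly) will then force $\psi(A_\alpha)\ge\pi/2$. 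The natural candidate is the Riemann map $\phi_\alpha$ of $W(A_\alpha)^\circ$ onto $\DD$ normalised by $\phi_\alpha(0)=0$ and $\phi_\alpha'(0)>0$. Since $0\in W(A_\alpha)^\circ$, the block structure of $A_\alpha$ gives
\[
\phi_\alpha(A_\alpha)=\begin{pmatrix}\phi_\alpha(1)&0&0\\0&0&\alpha\phi_\alpha'(0)\\0&0&0\end{pmatrix},
\]
of operator norm $\max(|\phi_\alpha(1)|,\alpha\phi_\alpha'(0))\ge\alpha\phi_\alpha'(0)$.

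The key geometric step is the bound $\phi_\alpha'(0)\ge\pi/(2\alpha)$, which I plan to derive by Schwarz-comparison with the horizontal strip $S_\alpha:=\{z:|\Im z|<\alpha/2\}$. The closed disc $\{|z|\le\alpha/2\}$ meets $\partial S_\alpha$ only at $\pm i\alpha/2$, and both tangent segments joining $1$ to that disc end at points of imaginary part strictly less than $\alpha/2$ in modulus; hence $W(A_\alpha)^\circ\subset S_\alpha$. An explicit conformal map of $S_\alpha$ onto $\DD$ sending $0$ to $0$ is $z\mapsto\tanh(\pi z/(2\alpha))$, whose derivative at the origin equals $\pi/(2\alpha)$. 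Applying Schwarz's lemma to the composition of this map with $\phi_\alpha^{-1}:\DD\to W(A_\alpha)^\circ$ (which fixes $0$) gives $\phi_\alpha'(0)\ge\pi/(2\alpha)$, whence $\psi(A_\alpha)\ge\alpha\phi_\alpha'(0)\ge\pi/2$.

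The main technical obstacle is justifying the displayed formula for $\phi_\alpha(A_\alpha)$, since $\phi_\alpha$ is only continuous (not holomorphic) at the corners $1$ and $\pm i\alpha/2$ of $W(A_\alpha)$. My plan is to work first with the dilates $z\mapsto\phi_\alpha(rz)$ for $0<r<1$, each of which is holomorphic in a neighbourhood of $W(A_\alpha)$ by convexity, and satisfies $\sup_{W(A_\alpha)}|\phi_\alpha(rz)|\le 1$. Runge's theorem then supplies polynomial approximants $p_n$ converging uniformly on $W(A_\alpha)$, for which the block-triangular identity $p_n(A_\alpha)=\bigl(\begin{smallmatrix}p_n(1)&0&0\\0&p_n(0)&\alpha p_n'(0)\\0&0&p_n(0)\end{smallmatrix}\bigr)$ is exact; the entries $p_n(0),p_n(1)$ converge by uniform convergence on $W(A_\alpha)$, while $p_n'(0)\to r\phi_\alpha'(0)$ via Cauchy's formula on a small disc around the interior point $0$. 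This yields $\psi(A_\alpha)\ge r\alpha\phi_\alpha'(0)$ for each $r<1$, and letting $r\uparrow 1$ completes the argument.
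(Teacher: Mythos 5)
Your proof is correct, and the two essential ingredients are the same as in the paper: the containment of $W(A_\alpha)$ in the strip of half-width $\alpha/2$, the conformal map $z\mapsto\tanh(\pi z/(2\alpha))$ of that strip with derivative $\pi/(2\alpha)$ at the origin, and the observation that the $(2,3)$ entry of $f(A_\alpha)$ is $\alpha f'(0)$. The difference is in the execution. The paper simply takes $f(z)=\tanh(\pi z/(2\alpha))$ itself as the test function: since its poles lie at $i\alpha(2k+1)$, it is holomorphic on a neighbourhood of the \emph{closed} strip, hence of $W(A_\alpha)$, so $|f|\le 1$ on $W(A_\alpha)$ and $\|f(A_\alpha)\|\ge\alpha f'(0)=\pi/2$ with no boundary-regularity issues at all. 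You instead route the argument through the Riemann map $\phi_\alpha$ of $W(A_\alpha)^\circ$ and bound $\phi_\alpha'(0)\ge\pi/(2\alpha)$ by Schwarz's lemma against the strip; this is valid (and your dilation-plus-Runge argument correctly handles the fact that $\phi_\alpha$ is not holomorphic at the corners), but the entire last technical paragraph is work that the paper's choice of test function renders unnecessary. What your detour buys is the sharper intrinsic bound $\psi(A_\alpha)\ge\alpha\,\phi_\alpha'(0)=\alpha\sup\{|f'(0)|:f\in\hol(W(A_\alpha)^\circ,\DD)\}$, which is exactly the refinement discussed in remark (2) of \S\ref{S:conclusion}, where the paper notes that $\pi/2$ is not optimal and relates the limiting value to an analytic capacity. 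One tiny point: your justification that $W(A_\alpha)^\circ$ lies in the open strip via the tangent segments implicitly assumes $\alpha<2$ (otherwise $1$ lies in the disc and there are no tangent segments); it is cleaner, and covers all $\alpha>0$, to note that $W(A_\alpha)$ is contained in the closed strip, so its interior is contained in the interior of the closed strip.
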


\begin{proof}
Clearly $A_0$ is self-adjoint, so $\psi(A_0)=1$.

Now let $\alpha>0$.
Then 
\[W(A_\alpha)=\conv\Bigl(\{z:|z|\le\alpha/2\}\cup\{1\}\Bigr),
\]
where $\conv(\cdot)$ denotes the convex hull.
In particular, 
\[
W(A_\alpha)\subset S_\alpha:=\{z\in\CC:|\Im z|\le \alpha/2\}.
\]
Consider the function $f(z):=\tanh(\pi z/(2\alpha))$,
which is a conformal mapping of $S_\alpha$ onto
the closed unit disk $\overline{\DD}$.
We have $|f(z)|\le1$ for all $z\in W(A_\alpha)$. 
Also, substituting $A_\alpha$ directly into the 
Taylor series expansion of $f(z)$, we see that
\[
f(A_\alpha)=
\begin{pmatrix} 
f(1)&0&0\\ 0&0&\alpha f'(0)\\0&0&0
\end{pmatrix}
=
\begin{pmatrix} 
\tanh(\pi/(2\alpha))&0&0\\ 0&0&\pi/2\\0&0&0
\end{pmatrix}.
\]
In particular $\|f(A_\alpha)\|\ge\pi/2$. It follows that
$\psi(A_\alpha)\ge\pi/2$, as claimed.
\end{proof}


\section{Proof of Theorem~\ref{T:main}}\label{S:proof}

As mentioned in the Introduction, the strategy for proving Theorem~\ref{T:main}
is to deduce it from the pointwise estimate \eqref{E:CPstrict}
using a form of compactness argument. This endeavour is complicated by the fact that, as we 
have just seen, the map $A\mapsto\psi(A)$ is discontinuous at certain points.
The discontinuity at multiples of the identity is not a problem, since,
using the relation $\psi(A+\beta I)=\psi(A)$, we may consider $\psi$
as being defined on the quotient space $M_N(\CC)/ \CC I$,
and  work on that space. The multiples of the identity then
`disappear'. On the other hand, the discontinuity of $\psi$ at points other than multiples of the identity,
such as the one observed in Proposition~\ref{P:discont}, is more problematic.
We deal with these by using a decomposition  technique to reduce the dimension.
The final proof is therefore a combination of a compactness argument and an induction.

The reduction of dimension is the subject of
the following theorem.
We recall that the constants $C_N$ were defined in \eqref{E:CNdef}.

\begin{theorem}\label{T:induction}
Let $A\in M_N(\CC)\setminus\CC I$ be such that $\sigma(A)\cap\partial W(A)\ne\emptyset$. 
If $A_n\to A$, then
\[
\limsup_{n\to\infty}\psi(A_n)\le C_{N-1}.
\]
\end{theorem}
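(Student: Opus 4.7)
The strategy is to exploit the reducing structure forced on $A$ by the hypothesis $\sigma(A)\cap\partial W(A)\ne\emptyset$ and to transport it to the nearby matrices $A_n$ via Riesz projections. First I would use the invariance $\psi(A+\beta I)=\psi(A)$ to assume $0\in\sigma(A)\cap\partial W(A)$. A classical fact about the numerical range then applies: eigenvalues lying on $\partial W(A)$ are ``normal'', meaning their algebraic and geometric multiplicities coincide and the corresponding eigenspace reduces $A$. Setting $H_1:=\ker A$ with $\dim H_1=k$ and $H_2:=H_1^\perp$, we thus obtain an orthogonal decomposition $A=0_k\oplus B$ with $B\in M_{N-k}(\CC)$ and $0\notin\sigma(B)$. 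The hypothesis $A\notin\CC I$ forces $1\le k\le N-1$.

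For $A_n\to A$, since $0$ is isolated from $\sigma(B)$, I would define Riesz projections $\Pi_n:=\tfrac{1}{2\pi i}\oint_{\gamma}(zI-A_n)^{-1}\,dz$ along a small circle $\gamma$ about $0$ containing no other eigenvalue of $A$. For large $n$, $\Pi_n$ is a well-defined $A_n$-invariant projection of rank $k$, and continuity of the resolvent gives $\Pi_n\to P_{H_1}$, the \emph{orthogonal} projection onto $H_1$. Hence the $A_n$-invariant subspaces $V_n:=\Pi_n(\CC^N)$ and $U_n:=\ker\Pi_n$ converge, as subspaces of $\CC^N$, to $H_1$ and $H_2$ respectively.

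The crux is a block-diagonalisation with controlled conditioning. Choose orthonormal bases of $V_n$ and $U_n$ varying continuously with $n$, and let $S_n$ be the matrix whose columns are these bases concatenated. Then $A_n=S_n(X_n\oplus W_n)S_n^{-1}$, where $X_n\in M_k(\CC)$ and $W_n\in M_{N-k}(\CC)$ represent $A_n|_{V_n}$ and $A_n|_{U_n}$ in those orthonormal bases. Since $H_1\perp H_2$, the columns of $S_n$ tend to an orthonormal basis of $\CC^N$, whence $\kappa(S_n):=\|S_n\|\,\|S_n^{-1}\|\to 1$. Crucially, because orthonormal bases of the invariant subspaces are used, $W(X_n)=W(A_n|_{V_n})\subset W(A_n)$ and similarly $W(W_n)\subset W(A_n)$. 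Consequently, for every polynomial $p$ with $|p|\le 1$ on $W(A_n)$, one has $\|p(X_n)\|\le\psi(X_n)\le C_k$ and $\|p(W_n)\|\le\psi(W_n)\le C_{N-k}$, and therefore
\[
\|p(A_n)\|\le \kappa(S_n)\max\bigl(\|p(X_n)\|,\|p(W_n)\|\bigr)\le \kappa(S_n)\max(C_k,C_{N-k})\le \kappa(S_n)\,C_{N-1}.
\]
Taking the supremum over $p$ and then the $\limsup$ in $n$ yields the conclusion.

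The principal obstacle is the requirement $\kappa(S_n)\to 1$: for a generic non-normal $A_n$ the Riesz decomposition is only a non-orthogonal direct sum, and a naive similarity bound gives $\kappa>1$ in the limit, which would be fatal. What rescues the argument is precisely the assumption that the eigenvalue lies on $\partial W(A)$, which forces the limiting invariant subspaces $H_1,H_2$ to be mutually orthogonal and hence $\kappa(S_n)=1+o(1)$. A secondary subtlety is that $0$ must be isolated from $\sigma(B)$ for the Riesz projection to make sense; this also follows from the ``normal eigenvalue'' property that underlies the clean decomposition $A=0_k\oplus B$.
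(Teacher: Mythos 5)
Your argument is correct and follows the same basic strategy as the paper's proof: split $A$ into an orthogonal direct sum of blocks with disjoint spectra using the normality of eigenvalues on $\partial W(A)$, transport this invariant-subspace decomposition to the nearby $A_n$ via spectral projections, and estimate each block by $C_{N-1}$ using the containment of the restricted numerical ranges in $W(A_n)$. The differences are in the packaging, and two are worth noting. You peel off only the eigenspace of a single boundary eigenvalue, whereas the paper's Lemma~\ref{L:decomp} removes all boundary eigenvalues at once; both yield what is needed, namely two complementary invariant subspaces of dimension at most $N-1$ with disjoint spectra (and your observation that $A\notin\CC I$ forces $1\le k\le N-1$ is the right check). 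More substantively, by expressing $A_n|_{V_n}$ and $A_n|_{U_n}$ in orthonormal bases of the perturbed invariant subspaces, your blocks are \emph{unitarily} equivalent to the restrictions, so $W(X_n)\subset W(A_n)$ holds on the nose and you pay only a single factor $\kappa(S_n)\to1$; the paper instead conjugates by $S_n:=P_nP+Q_nQ\to I$ (Lemma~\ref{L:invsub}) and then passes through the restriction $A_n|_{S_n(X)}$ by a further similarity, incurring $\|S_n\|^2\|S_n^{-1}\|^2$ --- immaterial in the limit, but your version is slightly cleaner. Two small points to tighten: the phrase ``orthonormal bases varying continuously with $n$'' should read ``converging to orthonormal bases of $H_1$ and $H_2$'' (obtainable, for instance, by Gram--Schmidt applied to the images under $\Pi_n$ of a fixed orthonormal basis of $H_1$, and under $I-\Pi_n$ of one of $H_2$); and the normality of boundary eigenvalues that you quote as classical is precisely the content of the paper's Lemma~\ref{L:decomp}, so you should either cite a source for it or include its short proof.
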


To prove this theorem, we require two lemmas. The first of these 
is a decomposition result.

\begin{lemma}\label{L:decomp}
Let  $A\in M_N(\CC)$.
Then $A$ is unitarily equivalent to a block matrix $D\oplus \tilde{A}$,
where $D$ is diagonal, $\sigma(D)\subset\partial W(A)$ 
and $\sigma(\tilde{A})\subset W(A)^\circ$.
\end{lemma}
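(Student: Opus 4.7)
The plan is to reduce the lemma to the classical fact that every eigenvalue of $A$ lying on $\partial W(A)$ is a \emph{normal} eigenvalue, in the sense that its eigenspace is reducing for $A$. I would first prove this. Given $\lambda\in\sigma(A)\cap\partial W(A)$, pick a supporting line of $W(A)$ at $\lambda$; after replacing $A$ by $e^{i\theta}(A-\lambda I)$ for an appropriate $\theta$, one may assume $\lambda=0$ and $W(A)\subset\{z:\Re z\ge 0\}$, so that the Hermitian part $H:=(A+A^*)/2$ is positive semidefinite. If $Ax=0$ with $\|x\|=1$, then $\langle Hx,x\rangle=\Re\langle Ax,x\rangle=0$, and $H\ge 0$ forces $Hx=0$, whence $A^*x=-Ax=0$. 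Thus $\ker(A-\lambda I)\subset\ker(A^*-\bar\lambda I)$, which implies that $\ker(A-\lambda I)$ reduces $A$: for $y\in(\ker(A-\lambda I))^\perp$ and $x\in\ker(A-\lambda I)$, $\langle (A-\lambda I)y,x\rangle=\langle y,(A^*-\bar\lambda I)x\rangle=0$.

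Next I would verify that eigenspaces corresponding to distinct boundary eigenvalues are mutually orthogonal, which lets one combine them into a single reducing subspace. Enumerate the distinct points of $\sigma(A)\cap\partial W(A)$ as $\lambda_1,\dots,\lambda_m$, and set $E_j:=\ker(A-\lambda_j I)$; the previous step gives $E_j\subset\ker(A^*-\bar\lambda_j I)$ for each $j$. For $j\ne\ell$, $x_j\in E_j$, $x_\ell\in E_\ell$,
\[
\lambda_j\langle x_j,x_\ell\rangle=\langle Ax_j,x_\ell\rangle=\langle x_j,A^*x_\ell\rangle=\lambda_\ell\langle x_j,x_\ell\rangle,
\]
so $\langle x_j,x_\ell\rangle=0$. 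Hence $M:=E_1\oplus\cdots\oplus E_m$ is an orthogonal direct sum of reducing subspaces, so both $M$ and $M^\perp$ reduce $A$. Setting $D:=A|_M$ and $\tilde A:=A|_{M^\perp}$, an orthonormal basis of $M$ built from eigenvectors of $A$ realizes $D$ as diagonal with $\sigma(D)\subset\{\lambda_1,\dots,\lambda_m\}\subset\partial W(A)$.

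It remains to check that $\sigma(\tilde A)\subset W(A)^\circ$. Since $\sigma(A)$ is contained in the compact convex set $W(A)=W(A)^\circ\cup\partial W(A)$ (a disjoint union), it suffices to show that no $\lambda_j$ is an eigenvalue of $\tilde A$. But any $\lambda_j$-eigenvector of $\tilde A$ would lie in $M^\perp\cap E_j=\{0\}$, a contradiction. Hence $\sigma(\tilde A)\subset\sigma(A)\setminus\partial W(A)\subset W(A)^\circ$, as required. The main obstacle is the very first step, the Hermitian-part computation identifying boundary eigenvalues as normal eigenvalues; once that is in hand, the rest is a routine orthogonal decomposition.
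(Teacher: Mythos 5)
Your proof is correct, but it follows a genuinely different route from the paper's. You reduce the lemma to the classical fact that an eigenvalue lying on $\partial W(A)$ is a \emph{normal} (reducing) eigenvalue, proved by rotating so that $W(A)$ sits in a closed half-plane with $\lambda$ on the bounding line and exploiting positive semidefiniteness of the Hermitian part; orthogonality of distinct boundary eigenspaces then assembles the reducing subspace $M$, and the spectrum of $A|_{M^\perp}$ avoids every boundary eigenvalue because the full eigenspaces $E_j$ are swallowed by $M$. The paper instead applies Schur's theorem to put $A$ in lower triangular form with the boundary eigenvalues listed first, and kills the off-diagonal entries in those rows by a perturbation argument inside the numerical range: $\langle A(e_j+\alpha e_m),e_j+\alpha e_m\rangle/(1+|\alpha|^2)=\lambda_j+\overline{\alpha}\langle Ae_j,e_m\rangle+O(|\alpha|^2)$ must stay in $W(A)$, which is impossible for $\lambda_j\in\partial W(A)$ unless $\langle Ae_j,e_m\rangle=0$. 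The two arguments use the same geometric input (a boundary point of a convex set admits no interior directions of perturbation, equivalently a supporting line) but package it differently: the paper's version is self-contained and delivers the block form in one stroke from the triangularization, while yours isolates a reusable operator-theoretic fact (boundary eigenvalues are reducing) that transfers verbatim to bounded Hilbert-space operators with $\overline{W(T)}$ in place of $W(A)$. Both handle correctly the degenerate cases where $D$ or $\tilde{A}$ has dimension $0$.
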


\begin{remark}
It is not excluded that $D$ or $\tilde{A}$ have dimension $0$.
\end{remark}

\begin{proof}
By Schur's theorem, we can suppose that $A$ is lower triangular.
The diagonal entries of $A$ are the eigenvalues $\lambda_1,\dots,\lambda_N$.
We can suppose that they are ordered so that $\lambda_1,\dots,\lambda_k\in\partial W(A)$
and $\lambda_{k+1},\dots,\lambda_N\in W(A)^\circ$, where $0\le k\le N$.

It remains to show that the off-diagonal entries in rows $1$ to $k$ are all zero. 
Let $e_1,\dots,e_N$ be the standard unit vector basis of $\CC^N$.
Fix $j$ with $1\le j\le k$ and let $m\in\{j+1,\dots,N\}$. 
We need to show that $\langle Ae_j,e_m\rangle=0$.
For each $\alpha\in\CC$, we have $\|e_j+\alpha e_m\|^2=1+|\alpha|^2$, so,
by definition of numerical range,
\[
\frac{\bigl\langle (A(e_j+\alpha e_m),\,(e_j+\alpha e_m)\bigr\rangle}{1+|\alpha|^2}\in W(A).
\]
Expanding out the left-hand side and using the facts that $\langle Ae_j,e_j\rangle=\lambda_j$
and $\langle Ae_m,e_j\rangle=0$, we obtain
\[
\lambda_j+\overline{\alpha}\langle Ae_j,e_m\rangle +O(|\alpha|^2)\in W(A)
\quad(\alpha\in\CC,\alpha\to0).
\]
Since $\lambda_j\in\partial W(A)$, this forces $\langle Ae_j,e_m\rangle=0$, as required.
\end{proof}

The second lemma is a general operator-theory result 
about the stability of invariant-subspace decompositions.

\begin{lemma}\label{L:invsub}
Let $H$ be a Hilbert space and
let $T\in \cB(H)$. Suppose that $H=X\oplus Y$, where $X,Y$ are (not necessarily
orthogonal) $T$-invariant subspaces
such that $\sigma(T|_X)\cap\sigma(T|_Y)=\emptyset$.
If $T_n\to T$ in $\cB(H)$, then there exists a sequence
$S_n\to I$ in $\cB(H)$ such that, for all sufficiently large $n$,
we have $S_n^{-1}T_nS_n(X)\subset X$
and $S_n^{-1}T_nS_n(Y)\subset Y$.
\end{lemma}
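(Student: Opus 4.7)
The approach is to use Riesz spectral projections to track $X$ and $Y$ as $T$ is perturbed. The hypothesis $H=X\oplus Y$ with both summands $T$-invariant forces $\sigma(T)=\sigma(T|_X)\cup\sigma(T|_Y)$, and the disjointness assumption allows us to choose a rectifiable simple closed curve $\gamma$ enclosing $\sigma(T|_X)$ in its interior and leaving $\sigma(T|_Y)$ outside. Define
\[
P:=\frac{1}{2\pi i}\oint_\gamma (z-T)^{-1}\,dz,
\]
which is precisely the bounded (in general non-orthogonal) projection onto $X$ along $Y$. For all $n$ large enough that $\gamma\cap\sigma(T_n)=\emptyset$, define $P_n$ by the same contour integral with $T_n$ in place of $T$. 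Then $P_n$ is a projection, its range $X_n:=P_n(H)$ and kernel $Y_n:=\ker P_n$ are $T_n$-invariant, and $H=X_n\oplus Y_n$.

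The first technical point to verify is the norm convergence $P_n\to P$. This follows from the resolvent identity
\[
(z-T_n)^{-1}-(z-T)^{-1}=(z-T_n)^{-1}(T_n-T)(z-T)^{-1},
\]
together with compactness of $\gamma$ and a uniform bound on $\|(z-T)^{-1}\|$ for $z\in\gamma$.

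Next, set
\[
S_n:=P_nP+(I-P_n)(I-P).
\]
Using $P^2=P$ and $P_n^2=P_n$, a short computation gives $P_nS_n=P_nP=S_nP$, so $S_n$ intertwines $P$ with $P_n$. Moreover $S_n\to P+(I-P)=I$, hence $S_n$ is invertible for all sufficiently large~$n$, and the intertwining relation promotes to $S_nPS_n^{-1}=P_n$. Consequently $S_n(X)=S_n(PH)=P_n(H)=X_n$ and $S_n(Y)=S_n(\ker P)=\ker P_n=Y_n$. Since $T_n$ preserves $X_n$ and $Y_n$, it follows that $S_n^{-1}T_nS_n$ preserves $X$ and $Y$, as required.

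The main obstacle is the norm-convergence step $P_n\to P$; once the Riesz projections are shown to depend norm-continuously on the operator in a neighbourhood of $T$, the formula for $S_n$ and the intertwining identity handle the rest.
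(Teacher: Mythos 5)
Your proof is correct and takes essentially the same route as the paper: the paper also uses the Riesz spectral projections $P_n$ of $T_n$ associated with a neighbourhood of $\sigma(T|_X)$ and defines $S_n:=P_nP+Q_nQ$ with $Q=I-P$ and $Q_n=I-P_n$, which is exactly your $S_n$. The only cosmetic difference is that you establish the intertwining identity $S_nP=P_nS_n$ explicitly, whereas the paper verifies $S_n(X)=P_n(H)$ by a direct computation.
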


\begin{proof}
Fix disjoint open subsets $U,V$ of $\CC$ such that $\sigma(T|_X)\subset U$ and $\sigma(T|_Y)\subset V$. Then, for all large enough $n$, we have $\sigma(T_n)\subset U\cup V$.
Let $P_n,Q_n$
be the spectral projections of $T_n$ corresponding to $U,V$
respectively.
Then $P_n\to P$ and $Q_n\to Q$, where $P,Q$
are projections of $H$ onto $X,Y$ respectively such that $P+Q=I$.
Also  $P_nQ_n=Q_nP_n=0$ for all $n$.

Define $S_n:=P_nP+Q_nQ$. Then $S_n\to P^2+Q^2=P+Q=I$.
In particular, $S_n$ is invertible for all sufficiently large $n$.
For these $n$, we have $S_n(H)=H$, and so
\begin{align*}
P_n(H)&=P_nS_n(H)=P_n(P_nP+Q_nQ)(H)\\
&=P_n^2(X)+P_nQ_n(Y)=P_n(X)=S_n(X).
\end{align*}
As $P_n(H)$ is $T_n$-invariant, we deduce that
 $T_nS_n(X)\subset S_n(X)$, from which it follows that
$S_n^{-1}T_n S_n(X)\subset X$. Likewise $S_n^{-1}T_n S_n(Y)\subset Y$.
\end{proof}

Now we  return to Theorem~\ref{T:induction}

\begin{proof}[Proof of Theorem~\ref{T:induction}]
By assumption $A\notin \CC I$ and $\sigma(A)\cap\partial W(A)\ne\emptyset$.
Using Lemma~\ref{L:decomp}, we deduce that
$A$ is unitarily equivalent to a block matrix $E\oplus F$
with $E\in M_{N_1}(\CC)$ and $F\in M_{N_2}(\CC)$,
where $N_1,N_2\le N-1$ and
$\sigma(E)\cap \sigma(F)=\emptyset$.
We can suppose without loss of generality that $A=E\oplus F$.

Let $A_n\to A$ in $M_N(\CC)$.
By Lemma~\ref{L:invsub}, applied with
 $X:=\CC^{N_1}\oplus 0$ and $Y:=0\oplus \CC^{N_2}$,
there exists a sequence $S_n\to I$ in $M_N(\CC)$ such that,
for all sufficiently large $n$, we have
$S_n^{-1}A_nS_n=E_n\oplus F_n$, also a block matrix. 
Henceforth, we restrict attention to these $n$.

Let $p$ be a polynomial such that $|p|\le 1$ on $W(A_n)$. Then
\begin{align*}
\|p(A_n)\|&=\|S_np(E_n\oplus F_n)S_n^{-1}\| \\
&\le \|S_n\|\|S_n^{-1}\|\|p(E_n\oplus F_n)\|\\
&=\|S_n\|\|S_n^{-1}\|\max\{\|p(E_n)\|,\|p(F_n)\|\}
\end{align*}
Now $S_n(X)$ is $A_n$-invariant and $S_n|_X$ is an invertible map of $X$ onto $S_n(X)$.
We further have
\[
E_n=(S_n|_X)^{-1}\circ(A_n|_{S_n(X)})\circ(S_n|_X).
\]
Therefore
\[
\|p(E_n)\|\le \|(S_n|_X)^{-1}\|\|p(A_n|_{S_n(X)})\|\|(S_n|_X)\|\le \|S_n\|\|S_n^{-1}\|\|p(A_n|_{S_n(X)})\|.
\]
As $W(A_n|_{S_n(X)})\subset W(A_n)$, we have $|p|\le 1$ on $W(A_n|_{S_n(X)})$.
Furthermore $\dim S_n(X)=N_1\le N-1$. So, by the definition of $C_{N_1}$, it follows that
\[
\|p(A_n|_{S_n(X)})\|\le C_{N_1}\le C_{N-1}.
\]
We thus obtain that
\[
\|p(E_n)\|\le\|S_n\|\|S_n^{-1}\|C_{N-1}.
\]
Likewise for $p(F_n)$. Substituting these estimates into the bound for $\|p(A_n)\|$ above, we obtain
\[
\|p(A_n)\|\le \|S_n\|^2\|S_n^{-1}\|^2C_{N-1}.
\]
As this holds for all polynomials $p$ with $|p|\le 1$ on $W(A_n)$, we deduce that
\[
\psi(A_n)\le \|S_n\|^2\|S_n^{-1}\|^2C_{N-1}.
\]
Letting $n\to\infty$, we obtain
\[
\limsup_{n\to\infty}\psi(A_n)\le \limsup_{n\to\infty}\|S_n\|^2\|S_n^{-1}\|^2C_{N-1}=C_{N-1},
\]
the last equality because $S_n\to I$. The theorem is proved.
\end{proof}

Finally, we can complete the proof of Theorem~\ref{T:main}

\begin{proof}[Proof of Theorem~\ref{T:main}]
The proof is by induction on $N$. The result is trivial if $N=1$, since $C_1=1$.
Suppose now that $N\ge 2$ and that $C_{N-1}<1+\sqrt{2}$. 

We first claim that the function 
\[
\tilde{\psi}(A):=\max\{\psi(A),C_{N-1}\}
\]
is upper semicontinuous on $M_N(\CC)\setminus \CC I$ (and thus continuous there).
Indeed, let $A\in M_N(\CC)\setminus\CC I$, and let $A_n\to A$. 
If $\sigma(A)\subset W(A)^\circ$, then by Theorem~\ref{T:usc} we have
\[
\limsup_{n\to\infty} \psi(A_n)\le \psi(A).
\]
If, on the other hand, $\sigma(A)\not\subset W(A)^\circ$, then by Theorem~\ref{T:induction} we have
\[
\limsup_{n\to\infty}\psi(A_n)\le C_{N-1}.
\]
Either way, we have
\[
\limsup_{n\to\infty}\tilde{\psi}(A_n)\le \tilde{\psi}(A),
\]
justifying the claim.

As mentioned at the beginning of this section, the fact that $\psi(A+\beta I)=\psi(A)$ for all $\beta\in\CC$
allows us to view $\psi$ as a function defined on the quotient space $M_N(\CC)/\CC I$.
The same is therefore true of $\tilde{\psi}$, 
and, in addition, $\tilde{\psi}$ is continuous with respect to
the quotient norm, except at $0$. As $M_N(\CC)/\CC I$ is finite-dimensional, 
its unit sphere is compact, so $\tilde{\psi}$ attains a maximum there,
say at $A_0$. 

For $A\in M_N(\CC)$ and $\alpha\in\CC\setminus\{0\}$, 
we have $\psi(\alpha A)=\psi(A)$, and hence also
$\tilde{\psi}(\alpha A)=\tilde{\psi}(A)$.
Thus in fact $\tilde{\psi}$ attains a global maximum at $A_0$, that is,
\[
\max\{\psi(A),C_{N-1}\}\le \max\{\psi(A_0),C_{N-1}\}\quad\forall A\in M_N(\CC).
\]
Consequently 
\[
C_N\le\max\{\psi(A_0),C_{N-1}\}.
\]
Finally, we know that $\psi(A_0)<1+\sqrt{2}$
by the pointwise estimate \eqref{E:CPstrict}, 
and $C_{N-1}<1+\sqrt{2}$ by the inductive hypothesis.
We may therefore conclude that $C_N<1+\sqrt{2}$, 
thereby completing the induction.
\end{proof}


\section{Concluding remarks and questions}\label{S:conclusion}

(1) Theorem~\ref{T:usc} can be generalized as follows.

\begin{theorem}
Let  $A\in M_N(\CC)$. 
Suppose that $\sigma(A)\cap \partial W(A)$ 
is either empty or consists exclusively of 
simple eigenvalues of $A$. If $A_n\to A$ in $\cB(H)$, then
\[
\limsup_{n\to\infty}\psi(A_n)\le \psi(A).
\]
\end{theorem}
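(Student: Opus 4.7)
The plan is to mimic the proof of Theorem~\ref{T:usc}, using Lemma~\ref{L:decomp} and Lemma~\ref{L:invsub} to peel off the boundary eigenvalues. Up to unitary equivalence (which preserves $\psi$), write $A=D\oplus\tilde{A}$ with $D$ diagonal, $\sigma(D)\subset\partial W(A)$, and $\sigma(\tilde{A})\subset W(A)^\circ$. The hypothesis that the boundary eigenvalues are simple translates exactly into $D$ having \emph{distinct} diagonal entries. For any sequence $A_n\to A$, Lemma~\ref{L:invsub} yields $S_n\to I$ with $S_n^{-1}A_nS_n=E_n\oplus\tilde{A}_n$, so that $E_n\to D$ and $\tilde{A}_n\to\tilde{A}$. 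Since $D$ has distinct entries, standard perturbation theory provides $V_n\to I$ in $M_{N_1}(\CC)$ that diagonalizes $E_n$; absorbing the block $V_n\oplus I$ into $S_n$, we may assume from the outset that $E_n$ itself is diagonal.

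Assume for contradiction that there exist $\epsilon>0$ and polynomials $p_n$ with $|p_n|\le 1$ on $W(A_n)$ and $\|p_n(A_n)\|>\psi(A)+\epsilon$. Then
\[
\|p_n(A_n)\|\le\|S_n\|\|S_n^{-1}\|\max\bigl\{\|p_n(E_n)\|,\|p_n(\tilde{A}_n)\|\bigr\},
\]
and since $\|S_n\|\|S_n^{-1}\|\to1$, the maximum on the right exceeds $\psi(A)+\epsilon/2$ for large $n$. But $E_n$ is diagonal with entries in $\sigma(A_n)\subset W(A_n)$, so $\|p_n(E_n)\|\le 1$. (If $\tilde{A}$ has dimension $0$, then $A$ is normal with $\psi(A)=1$, and this bound alone contradicts the assumption.) Hence $\|p_n(\tilde{A}_n)\|>\psi(A)+\epsilon/2$ for all large $n$.

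Next, run the normal-family argument of Theorem~\ref{T:usc} on $W(A)^\circ$, which is nonempty because it contains $\sigma(\tilde{A})$. Extract a subsequence along which $p_n\to f$ locally uniformly on $W(A)^\circ$ with $|f|\le 1$ there. Because $\sigma(\tilde{A})\subset W(A)^\circ$ and $\tilde{A}_n\to\tilde{A}$, the holomorphic functional calculus gives $p_n(\tilde{A}_n)\to f(\tilde{A})$, so $\|f(\tilde{A})\|\ge\psi(A)+\epsilon/2$. After translating so that $0\in W(A)^\circ$, the dilate $f_r(z):=f(rz)$ lies in $A(W(A))$ with $|f_r|\le 1$ on $W(A)$ for each $r\in(0,1)$. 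Applying \eqref{E:fc} to $A=D\oplus\tilde{A}$ gives $\|f_r(\tilde{A})\|\le\|f_r(A)\|\le\psi(A)$, and letting $r\to 1^-$ yields $\|f(\tilde{A})\|\le\psi(A)$, the desired contradiction.

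The main obstacle is the estimate $\|p_n(E_n)\|\le 1$: it rests entirely on the simplicity of the boundary eigenvalues, which is what permits a near-identity diagonalization of $E_n$. Without simplicity, Jordan blocks at points of $\partial W(A)$ would allow $\|p_n(E_n)\|$ to blow up even when $|p_n|\le 1$ on $\sigma(E_n)$, exactly in the spirit of the discontinuity exhibited in Proposition~\ref{P:discont}.
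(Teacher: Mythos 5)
Your proof is correct and is exactly the argument the paper has in mind: it omits the details, saying only that one combines the techniques of Theorems~\ref{T:usc} and~\ref{T:induction}, and your write-up realizes that combination via Lemmas~\ref{L:decomp} and~\ref{L:invsub}, with the one genuinely new ingredient (near-identity diagonalization of the boundary block $E_n$, valid precisely because the boundary eigenvalues are simple) supplied and correctly identified as the crux. The degenerate cases ($D$ or $\tilde{A}$ of dimension $0$) and the final dilation step are also handled properly.
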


The proof is  by combining the techniques used to prove 
Theorems~\ref{T:usc} and~\ref{T:induction}. We omit the details, since the result is
not needed here. However, it does show that the only $3\times 3$ matrices
at which $\psi$ can be discontinuous are those unitarily equivalent to
matrices of the form $\alpha A_0+\beta I$,
where $A_0$ is the matrix in Proposition~\ref{P:discont}. This explains the choice of $A_0$
in that example.

(2) The constant $\pi/2$ appearing in Proposition~\ref{P:discont} is not optimal.
A more careful analysis shows that,
in the notation of Proposition~\ref{P:discont}, we  have
\[
\psi(A_\alpha)\ge\sup\{|f'(0)|: f\in\hol(D,\DD)\} 
\quad(\alpha>0),
\]
where $\DD$ is the open unit disk and $D:=\DD\cup\{z:\Re z>0,|\Im z|< 1\}$.
By considering $f(1/z)$, we deduce that
\[
\psi(A_\alpha)\ge \gamma(K) \quad(\alpha>0),
\]
where $K$ is the compact set formed by taking 
the complement of the image of $D$ under the map $z\mapsto 1/z$,
and $\gamma(K)$ denotes the analytic capacity of $K$.
A simple calculation shows that $K$ is
the union of three semi-disks as shown in  Figure~\ref{F:K}.
In particular, since $K$ is connected, its analytic capacity $\gamma(K)$
coincides with its logarithmic capacity $c(K)$. Thus
\[
\psi(A_\alpha)\ge c(K) \quad(\alpha>0).
\] 
It is plausible that $\limsup_{A\to A_0}\psi(A)=c(K)$, though we cannot prove it.
Note, however, that by Theorem~\ref{T:induction} we must have $\limsup_{A\to A_0}\psi(A)\le2$.

\begin{figure}[htb]
\begin{center}
\begin{tikzpicture}[scale=1.4]
\fill[fill=gray!40] (0,0) circle(1);
\fill[fill=white] (0,1)--(0,-1)--(1,-1)--(1,1)--(0,1);
\fill[fill=gray!40] (0,1/2) circle(1/2);
\fill[fill=gray!40] (0,-1/2) circle(1/2);
\filldraw (0,0) circle (1/2pt) node[left] {$0$};
\filldraw (-1,0) circle (1/2pt) node[left] {$-1$};
\filldraw (0,1) circle (1/2pt) node[above] {$i$};
\filldraw (0,-1) circle (1/2pt) node[below] {$-i$};
\end{tikzpicture}
\end{center}
\caption{The set $K$}\label{F:K}
\end{figure}
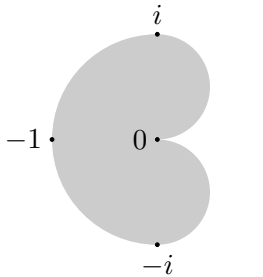

(3) Lemma~\ref{L:decomp}  implies a stronger form of itself, in which the inclusion 
$\sigma(\tilde{A})\subset W(A)^\circ$ is replaced by $\sigma(\tilde{A})\subset W(\tilde{A})^\circ$.
To see this, it suffices to reapply the lemma with $A$ replaced by $\tilde{A}$, 
and  repeat as often as necessary.

(4) It would be interesting to quantify the arguments used in proving that $C_N<1+\sqrt{2}$
to obtain concrete numerical estimates better than $1+\sqrt{2}$.
Even an estimate for $C_3$ would be of interest.

(5) Of course, the biggest problem is to identify $C:=\lim_{N\to\infty}C_N$.
According to Crouzeix's conjecture $C=2$. Can one at least show that $C<1+\sqrt{2}$?

\subsection*{Acknowledgement}
We thank \L ukasz Kosi\'nski for helpful discussions.

\bibliographystyle{plain}
\bibliography{crouzeix.bib}

\end{document}